\documentclass[a4paper,12pt,reqno]{amsart}

\usepackage{amsmath,amssymb,amsthm}
\usepackage{comment}

\numberwithin{equation}{section}
\allowdisplaybreaks
\newtheorem{thm}{Theorem}[section]
\newtheorem{lem}[thm]{Lemma}
\begin{document}
	
	\title{On the Third Hankel Determinant for Inverse Coefficients of Starlike Functions: A Bernstein Polynomial Approach}
	\author{Vasudevarao Allu}
	\address{Vasudevarao Allu, Department of Mathematics, School of Basic Sciences,
		Indian Institute of Technology Bhubaneswar, Jatni Road, Argul,
		Bhubaneswar 752050, Odisha, India}
	\email{avrao@iitbbs.ac.in}
	
	\author{Shobhit Kumar}
	\address{Shobhit Kumar, Department of Mathematics, School of Basic Sciences,
		Indian Institute of Technology Bhubaneswar, Jatni Road, Argul,
		Bhubaneswar 752050, Odisha, India}
	\email{a21ma09007@iitbbs.ac.in}
	\subjclass[2020]{30C45, 30C50, 41A10}
	\keywords{Starlike functions, inverse coefficients, Hankel determinant, Carath\'eodory class, Bernstein polynomial}
	\date{}
	\begin{abstract}
		Let $\mathcal{A}$ denote the class of normalized analytic functions $f$ in the open unit disk defined as
		$
		\mathbb{D}:=\{z\in\mathbb{C}:|z|<1\}
		$
		with $f(0)=0$ and $f'(0)=1$. A function $f\in\mathcal{A}$ is said to be starlike if $f(\mathbb{D})$ is starlike domain. By using  the Bernstein polynomial method to obtain the required maximum estimate, we establish  sharp upper bound for the third Hankel determinant corresponding to the inverse coefficients of starlike univalent ({\it i.e.}, one-to-one) functions in the unit disk $\mathbb{D}$. 
	\end{abstract}
	\maketitle
	\markboth{V.~Allu and S.~Kumar}{Inverse Hankel determinant for starlike functions}
	
\section{Introduction}

Let $\mathcal{A}$ denote the class of normalized analytic functions $f$ in the open unit disk
$\mathbb{D}:=\{z\in\mathbb{C}:|z|<1\}$ having the Taylor series  expansion
\begin{equation}
	\label{eq:1.1}
	f(z)=z+\sum_{n=2}^{\infty}a_n z^n,\quad \text{ for } z\in\mathbb{D},
\end{equation}
and let $\mathcal{S}$ be the subclass of $\mathcal{A}$ consisting of functions that are univalent ({\it i.e.}, one-to-one) in the unit disk $\mathbb{D}$. Two important subclasses of $\mathcal{S}$ are the classes of starlike and convex functions, defined respectively by
\[
\mathcal{S}^{*}
=
\left\{
f \in \mathcal{A} : \operatorname{Re}\left( \frac{z f'(z)}{f(z)} \right) > 0,\ z \in \mathbb{D}
\right\},
\]
and
\[
\mathcal{C}
=
\left\{
f \in \mathcal{A} : \operatorname{Re}\left( 1 + \frac{z f''(z)}{f'(z)} \right) > 0,\ z \in \mathbb{D}
\right\}.
\]
For every function \(f\in\mathcal{S}\), the local inverse \(f^{-1}\) exists in a neighborhood of the origin and admits the expansion
\[
f^{-1}(w)=w+A_2w^2+A_3w^3+\cdots .
\]
The notion of Hankel determinants for functions in \(\mathcal{A}\) was introduced by Pommerenke \cite{Pommerenke1966}. For \(n,q\in\mathbb{N}\), the \(q\)th Hankel determinant is given by
\[
H(q,n)(f)
:=
\begin{vmatrix}
	a_n     & a_{n+1}   & \cdots & a_{n+q-1} \\
	a_{n+1} & a_{n+2}   & \cdots & a_{n+q}   \\
	\vdots  & \vdots    & \ddots & \vdots    \\
	a_{n+q-1} & a_{n+q} & \cdots & a_{n+2q-2}
\end{vmatrix}.
\]
Among the simplest cases, the determinants
\[
H(2,1)(f)=a_2^2-a_3
\qquad\text{and}\qquad
H(2,2)(f)=a_2a_4-a_3^2
\]
have received considerable attention in the literature; see, for example,
(see \cite{ChoKowalczykKwonLeckoSim2018,LeeRavichandranSupramanian2013}).\\
\par The study of the third Hankel determinant is substantially more involved. Over the past several years, a number of authors have obtained important results with regards to 
\[
H(3,1)(f)=2a_2a_3a_4-a_3^3-a_4^2+a_3a_5-a_2^2a_5
\]
for various subclasses of analytic and univalent functions (see
\cite{BangaKumar2020,KowalczykLeckoLeckoSim2018,KwonLeckoSim2019,LeckoSimSmiarowska2019,RiazRazaThomas2022}).
In particular, sharp bounds for this determinant have been established for both the class of  convex and starlike functions in recent works such as
(see \cite{KowalczykLeckoSim2018convex,KowalczykLeckoThomas2022}).\\
\par In the present paper, we investigate the third Hankel determinant formed from the inverse coefficients of functions belonging to \(\mathcal{S}^{*}\). Our main theorem gives a sharp estimate for \(\bigl|H(3,1)(f^{-1})\bigr|\), with equality attained by the Koebe function. The proof relies on the Carath\'eodory representation of starlike functions, together with the coefficient parametrization due to Libera and Z{\l}otkiewicz \cite{LiberaZlotkiewicz1982} and Kwon, \textit{et al.} \cite{KwonLeckoSim2018Caratheodory}.\\
\par A crucial step in the proof is the estimation of a polynomial on a rectangular box. For this purpose, we employ the Bernstein polynomial method,  for establishing the positivity or nonnegativity of polynomials on compact intervals and rectangular boxes; see Cargo and Shisha \cite{CargoShisha1966} and Garloff \cite{Garloff1986}. This approach provides an effective tool for reducing the required maximization problem to the inspection of Bernstein coefficients.

Let \(\mathcal{P}\) denote the class of analytic functions \(p\) in \(\mathbb{D}\) of the form
\begin{equation}
	\label{eq:1.2}
	p(z)=1+\sum_{n=1}^{\infty}c_n z^n
\end{equation}
with positive real part in \(\mathbb{D}\). The following lemma will play a central role in the proof of our main result, since it provides the standard representations of the coefficients \(c_2\), \(c_3\), and \(c_4\).
	
\begin{lem}{\cite{LiberaZlotkiewicz1982}}
	\label{lem:1.1}
		Let $p \in \mathcal{P}$ and be given by \eqref{eq:1.2} with $c_1 \ge 0$. Then
		\begin{align*}
			2 c_2 &= c_1^2 + \gamma \bigl(4 - c_1^2\bigr), \\[2mm]
			4 c_3 &= c_1^3 + 2\bigl(4 - c_1^2\bigr)c_1 \gamma
			- \bigl(4 - c_1^2\bigr)c_1 \gamma^2
			+ 2\bigl(4 - c_1^2\bigr)\bigl(1 - |\gamma|^2\bigr)\eta, \\[2mm]
			8 c_4 &= c_1^4
			+ \bigl(4 - c_1^2\bigr)\gamma\bigl(c_1^2(\gamma^2 - 3\gamma + 3) + 4\gamma\bigr)\\[1mm]
			&\quad- 4\bigl(4 - c_1^2\bigr)\bigl(1 - |\gamma|^2\bigr)
			\left(c_1(\gamma - 1)\eta + \overline{\gamma}\,\eta^2
			- (1 - |\eta|^2)\rho\right)
		\end{align*}
		for some $\gamma,\eta,\rho$ such that $|\gamma| \le 1$, $|\eta| \le 1$ and $|\rho| \le 1$.
	\end{lem}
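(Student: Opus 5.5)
This lemma is classical, and the natural proof is the Schur algorithm applied to $p$. Set
\[
\omega(z)=\frac{p(z)-1}{p(z)+1},
\]
which maps $\mathbb{D}$ into $\overline{\mathbb{D}}$ with $\omega(0)=0$ (if $|\omega|$ attains $1$, $p$ is constant and the formulas are trivial). By the Schwarz lemma, $\omega(z)=z\,\omega_1(z)$ with $\omega_1$ a self-map of $\overline{\mathbb{D}}$. Write $\omega_1(0)=c_1/2$, which is real and nonnegative by the hypothesis $c_1\ge 0$.

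I would then iterate the Schur transformation:
\[
\omega_{k+1}(z)=\frac{1}{z}\,\frac{\omega_k(z)-\omega_k(0)}{1-\overline{\omega_k(0)}\,\omega_k(z)},
\]
which is again a self-map of the closed disc whenever $|\omega_k(0)|<1$. The first three Schur parameters are
\[
\omega_1(0)=\tfrac{c_1}{2},\qquad \omega_2(0)=\gamma,\qquad \omega_3(0)=\eta,
\]
and the next is $\rho=\omega_4(0)$. In the degenerate cases $|\gamma|=1$ or $|\eta|=1$ the recursion stops; there $\omega_k$ is a unimodular constant, and the formulas hold with arbitrary later parameters because of the vanishing factors $1-|\gamma|^2$ and $1-|\eta|^2$. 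The case $c_1=2$ gives $p=(1+z)/(1-z)$, and the factor $4-c_1^2$ kills all free terms. All of these edge cases need a brief remark.

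Inverting the recursion expresses the Taylor coefficients of $\omega$ through the Schur parameters. With $b_1=c_1/2$, the coefficients of $\omega=\sum_{n\ge 1} w_n z^n$ are
\begin{align*}
w_1&=b_1,\\
w_2&=(1-b_1^2)\gamma,\\
w_3&=(1-b_1^2)\bigl((1-|\gamma|^2)\eta-b_1\gamma^2\bigr),
\end{align*}
and $w_4$ follows similarly, now involving $\rho$, $\eta^2\overline{\gamma}$, and so on. This comes from expanding $\omega_k=\omega_k(0)+z\,\omega_{k+1}(\cdot)$ composed with a disc automorphism to the required order.

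Finally, from $p=(1+\omega)/(1-\omega)=1+2\omega+2\omega^2+\cdots$ I get
\begin{align*}
c_2&=2w_2+2w_1^2,\\
c_3&=2w_3+4w_1w_2+2w_1^3,\\
c_4&=2w_4+4w_1w_3+2w_2^2+6w_1^2w_2+2w_1^4.
\end{align*}
Substituting $b_1=c_1/2$, so that $1-b_1^2=(4-c_1^2)/4$, gives the stated formulas after collecting terms.

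The main obstacle is the bookkeeping for $w_4$ and $c_4$. One must check that the combination collapses exactly to
\[
\gamma\bigl(c_1^2(\gamma^2-3\gamma+3)+4\gamma\bigr)
\]
together with the $\eta$, $\eta^2$ and $\rho$ terms with their stated signs. I would verify this by a careful symbolic expansion, cross-checking with extremal choices such as $\gamma=1$, or $c_1=0$, where $c_4$ should reduce to the known two-parameter formula.
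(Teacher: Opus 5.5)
Your argument is correct. It takes a different route from the paper only in the sense that the paper gives no proof at all: it imports the lemma from Libera--Z{\l}otkiewicz (for $c_2,c_3$) and Kwon et al.\ (for $c_4$). I checked the bookkeeping you deferred. One more step of the Schur recursion gives
\[
w_4=(1-b_1^2)\Bigl((1-|\gamma|^2)\bigl((1-|\eta|^2)\rho-\overline{\gamma}\,\eta^2\bigr)-2b_1(1-|\gamma|^2)\gamma\eta+b_1^2\gamma^3\Bigr).
\]
Insert $w_1,\dots,w_4$ into $8c_4=16w_4+32w_1w_3+16w_2^2+48w_1^2w_2+16w_1^4$ with $b_1=c_1/2$, and split the result as follows.
\begin{itemize}
\item Apart from $c_1^4$, the $\gamma$-only terms are $(4-c_1^2)\bigl(c_1^2\gamma^3-2c_1^2\gamma^2+(4-c_1^2)\gamma^2+3c_1^2\gamma\bigr)$. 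This equals $(4-c_1^2)\gamma\bigl(c_1^2(\gamma^2-3\gamma+3)+4\gamma\bigr)$.
\item The $\eta$-terms coming from $16w_4$ and $32w_1w_3$ combine to $-4(4-c_1^2)(1-|\gamma|^2)c_1(\gamma-1)\eta$.
\item The remainder is $4(4-c_1^2)(1-|\gamma|^2)\bigl((1-|\eta|^2)\rho-\overline{\gamma}\eta^2\bigr)$.
\end{itemize}
This is exactly the stated formula, and your $c_2$ and $c_3$ computations also check out.

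The degenerate cases behave as you say. Setting $1-|\gamma|^2=0$ or $1-|\eta|^2=0$ in these expansions reproduces the exact coefficients of the terminated recursion. For example, $\omega_1=(b_1+\gamma z)/(1+b_1\gamma z)$ when $|\gamma|=1$.

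One small correction: $\operatorname{Re}p>0$ is equivalent to $|p-1|<|p+1|$. So $\omega$ maps into the open disc, and your caveat about $|\omega|$ attaining $1$ is vacuous. This is harmless.

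Compared with the classical determinant-based derivation, as in Libera--Z{\l}otkiewicz via the Carath\'eodory--Toeplitz conditions, your route has several advantages. It gives $\gamma,\eta,\rho$ an intrinsic meaning as Schur parameters. It makes their ranges in the closed disc, and the degenerate cases, transparent. It also produces $c_n$ for every $n$ by the same mechanical recursion. The only cost is the algebra you flagged, and that algebra closes as shown.
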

	
	\section{Main Result}
	
	\begin{thm}
		\label{thm:2.1}
		Let $f \in \mathcal{S}^{*}$  be given by \eqref{eq:1.1}. Then
		\[
		\bigl| H(3,1)(f^{-1}) \bigr| \le 1.
		\]
		The inequality is sharp for the Koebe function.
	\end{thm}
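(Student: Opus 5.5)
We follow the standard route for sharp third Hankel estimates in $\mathcal{S}^{*}$, working with the inverse coefficients throughout.

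\textbf{Step 1: express the inverse coefficients through Carath\'eodory coefficients.} Write $zf'(z)/f(z)=p(z)$ with $p\in\mathcal{P}$ given by \eqref{eq:1.2}. Comparing coefficients gives
\[
a_2=c_1,\qquad a_3=\tfrac12(c_2+c_1^2),
\]
together with similar formulas for $a_4$ and $a_5$ in terms of $c_1,\dots,c_4$. Lagrange inversion then gives
\[
A_2=-a_2,\qquad A_3=2a_2^2-a_3,\qquad A_4=-5a_2^3+5a_2a_3-a_4,\qquad A_5=14a_2^4-21a_2^2a_3+6a_2a_4+3a_3^2-a_5.
\]
Substituting these into $H(3,1)(f^{-1})=2A_2A_3A_4-A_3^3-A_4^2+A_3A_5-A_2^2A_5$ expresses it as a polynomial in $c_1,\dots,c_4$. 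The class $\mathcal{S}^{*}$ and the functional $|H(3,1)|$ are invariant under rotations $f(z)\mapsto e^{-i\theta}f(e^{i\theta}z)$, and so is $f^{-1}$. Hence we may assume $c_1=c\in[0,2]$ and invoke Lemma~\ref{lem:1.1}. This writes $H(3,1)(f^{-1})$ as a polynomial in $c,\gamma,\eta,\rho$ with $|\gamma|,|\eta|,|\rho|\le1$.

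\textbf{Step 2: endpoint cases.}
\begin{itemize}
\item If $c=2$, then $4-c^2=0$ and everything collapses to the Koebe function $k(z)=z/(1-z)^2$. Its inverse has coefficients $A_n=(-1)^{n+1}\frac{1}{n}\binom{2n}{n-1}$, that is, $A_2=-2$, $A_3=5$, $A_4=-14$, $A_5=42$. Direct evaluation gives $|H(3,1)(k^{-1})|=1$, which settles sharpness.
\item If $c=0$, the expression reduces to a polynomial in $\gamma,\eta,\rho$, which we bound crudely by the triangle inequality and check to be at most $1$.
\end{itemize}

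\textbf{Step 3: interior case $c\in(0,2)$.} Arrange the polynomial as
\[
H=\tfrac{(4-c^2)}{\,\cdot\,}\Bigl(\Phi_0(c,\gamma)+\Phi_1(c,\gamma)\eta+\Phi_2(c,\gamma)\eta^2+\Psi(c,\gamma)(1-|\eta|^2)\rho\Bigr)+\text{(pure $c$ term)}.
\]
Use $|\rho|\le1$ and the triangle inequality, and set $x=|\gamma|$, $y=|\eta|$. This gives
\[
|H|\le G(c,x,y)=|\text{pure }c\text{ term}|+\sum|\text{coefficients}|\,x^iy^j+|\Psi|(1-x^2)(1-y^2)\cdots,
\]
a real polynomial on the box $[0,2]\times[0,1]\times[0,1]$.

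The coefficients of the powers of $c$ may have mixed sign. Following the usual technique, we keep the terms of $\Phi_0$ exact in $\gamma$ where needed, bounding $|\alpha+\beta\gamma|\le|\alpha|+|\beta|x$, and split $[0,2]$ into subintervals if necessary. Since $c$ is real and nonnegative, each absolute value of a coefficient in $c$ can be resolved on each subinterval. The goal is then to prove $1-G(c,x,y)\ge0$ on the box.

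\textbf{Main obstacle: proving $1-G\ge0$ on the box.} We substitute $c=2t$ so that all three variables lie in $[0,1]$. We then expand $1-G$ in the tensor Bernstein basis of the appropriate multidegree $(n_1,n_2,n_3)$ and check that all Bernstein coefficients are nonnegative, using the Cargo--Shisha and Garloff criteria. The maximum equals $1$ exactly at $c=2$, so some coefficients will vanish at that face; this is consistent with nonnegativity but may force degree elevation.

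If a few coefficients are negative, we would first subdivide the box in $t$ near $t=1$, re-expand on each sub-box (Garloff's subdivision), and repeat until all coefficients are nonnegative. If that fails, we would tighten the triangle-inequality step for the $\eta$-terms. Concretely, we would bound $|\Phi_1\eta+\Phi_2\eta^2|+|\Psi|(1-y^2)$ using the standard lemma on $\max_y\bigl(A+By+Cy^2\bigr)$ before passing to Bernstein form.
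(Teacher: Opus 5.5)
Your overall route is the paper's route: the Carath\'eodory representation, the Libera--Z{\l}otkiewicz parametrization with $c_1=c\in[0,2]$, the triangle inequality in $\eta,\rho$ with $x=|\gamma|$, $y=|\eta|$, and a Bernstein-coefficient check on the box. (The paper first factors $1152-H=(4-p_1^2)(1-x)\Phi$, because its majorant is tight on both faces $p_1=2$ and $x=1$.)

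There is, however, a genuine gap. The step ``prove $1-G\ge0$'' fails if $G$ is built, as you describe, from absolute values of the coefficients, because that inequality is false.

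\textbf{Why $1-G\ge0$ is false.} In the $\eta$-free part $\widetilde{A}$ of $1152\,H(3,1)(f^{-1})$, the coefficient of $\gamma^2$ is $c^2(688-368c^2+49c^4)=c^2(c^2-4)(49c^2-172)$. This is negative for $172/49<c^2<4$. On $|\gamma|=1$ every $\eta$- and $\rho$-term vanishes, since each carries the factor $1-|\gamma|^2$. The termwise bound there equals $1152+2c^2(4-c^2)(49c^2-172)>1152$.

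There is no slack to absorb this loss. One has $\widetilde{A}(c,-1)=1152$ for every $c\in[0,2]$; these are the starlike functions $z/(1-cz+z^2)$. So $|H(3,1)(f^{-1})|=1$ along the whole curve $\gamma=-1$, not only at $c=2$ as you state. Hence your normalized $G$ exceeds $1$ on $\{x=1,\ \sqrt{172/49}<c<2\}$. No subdivision or degree elevation can certify it, and neither can a sharper treatment of the $\eta$-terms, which are absent at $x=1$.

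\textbf{What is missing.} You need a phase-sensitive estimate of the $\eta$-free part, namely $|\widetilde{A}(c,\gamma)|\le\widetilde{A}(c,-|\gamma|)$. Write $\widetilde{A}(c,-w)=\sum_{k=0}^{4}\alpha_k w^k$ with $\alpha_0=18c^6$, $\alpha_1=51c^4(4-c^2)$, $\alpha_2=c^2(4-c^2)(172-49c^2)$, $\alpha_3=(4-c^2)(17c^4-104c^2+288)$, $\alpha_4=c^2(4-c^2)^2$, so that $\sum_k\alpha_k=1152$. For $w=xe^{i\phi}$ one has $\bigl|\sum_k\alpha_k w^k\bigr|^2=\bigl(\sum_k\alpha_k x^k\bigr)^2-2\sum_{j<k}\alpha_j\alpha_k x^{j+k}\bigl(1-\cos((k-j)\phi)\bigr)$. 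You must show the last sum is nonnegative even when $\alpha_2<0$. In that range ($172/49<c^2<4$) one has $\alpha_1\ge4|\alpha_2|$ and $1-\cos2\phi\le4(1-\cos\phi)$, so $\alpha_0\alpha_1$ dominates $\alpha_0\alpha_2$; the remaining cross terms need a similar check.

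\textbf{Relation to the paper.} The paper's majorant \eqref{eq:2.6} uses exactly $\sum_k\alpha_kx^k$ for $|\widetilde{A}|$: it writes $x^2p_1^2(688-368p_1^2+49p_1^4)$ without absolute value. So the paper silently relies on this same inequality, which the triangle inequality alone does not provide. With that lemma supplied, the paper's majorant is legitimate and its Bernstein certificate for $\Phi$ completes the argument. Without it, your plan stalls at a false polynomial inequality.
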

	
	\begin{proof}
		Let $f \in \mathcal{S}^{*}$. Then there exists a function $p \in \mathcal{P}$ such that
		\begin{equation}\label{eq:2.1}
			z f'(z) = f(z)\,p(z).
		\end{equation}
		Note that since $f\bigl(f^{-1}(w)\bigr) = w$, it follows that
		\begin{equation}\label{eq:2.2}
			\left.
			\begin{aligned}
				A_2 &= -a_2,\\
				A_3 &= 2a_2^2 - a_3,\\
				A_4 &= 5 a_2 a_3 - 5 a_2^3 - a_4,\\
				A_5 &= 14 a_2^4 - 21 a_3 a_2^2 + 6 a_2 a_4 + 3 a_3^2 - a_5.
			\end{aligned}
			\right\}
		\end{equation}
		Comparing coefficients of \eqref{eq:2.1}, we obtain
		\begin{equation}\label{eq:2.3}
			\left.
			\begin{aligned}
				a_2 &= c_1,\\
				a_3 &= \frac{1}{2} c_2 + \frac{1}{2} c_1^2,\\
				a_4 &= \frac{1}{6} c_1^3 + \frac{1}{2} c_1 c_2 + \frac{1}{3} c_3,\\
				a_5 &= \frac{1}{24} c_1^4 + \frac{1}{4} c_2 c_1^2 + \frac{1}{3} c_1 c_3
				+ \frac{1}{8} c_2^2 + \frac{1}{4} c_4.
			\end{aligned}
			\right\}
		\end{equation}
		Using \eqref{eq:2.2} and \eqref{eq:2.3}, we obtain that
		\begin{align}\label{eq:2.4}
			144\, H(3,1)\,(f^{-1}) &=
			17 c_1^6 - 51 c_1^4 c_2 + 45 c_1^2 c_2^2 - 27 c_2^3
			+ 8 c_1^3 c_3 + 24 c_1 c_2 c_3\notag\\
			&\quad - 16 c_3^2
			- 18 c_1^2 c_4 + 18 c_2 c_4.
		\end{align}
		Since,  the class $\mathcal{S}^{*}$ is rotationally invariant. For $f \in \mathcal{S}^*$, the rotational functions 
		\[
		f_{\theta}(z):=e^{-i\theta}f(e^{i\theta}z)
		\]
		belongs to the class $\mathcal{S}^*.$ Moreover, if
		\begin{align*}
		f^{-1}(w)=w+A_{2}w^{2}+A_{3}w^{3}+A_{4}w^{4}+A_{5}w^{5}+\cdots,
		\end{align*}
		then
		\begin{align*}
		f_{\theta}^{-1}(w)=e^{-i\theta}f^{-1}(e^{i\theta}w)
		= w+e^{i\theta}A_{2}w^{2}+e^{2i\theta}A_{3}w^{3}+e^{3i\theta}A_{4}w^{4}
		+e^{4i\theta}A_{5}w^{5}+\cdots .
		\end{align*}
		Thus, we have 
		\[
		H(3,1)(f_{\theta}^{-1})=e^{6i\theta}H(3,1)(f^{-1}),
		\]
	 therefore, 
		\[
		|H(3,1)(f_{\theta}^{-1})|=|H(3,1)(f^{-1})|.
		\]
		Therefore, choosing $\theta$ suitably, we may assume that $c_{1}\in[0,2]$. Substituting the values of $c_2,c_3$ and $c_4$
		from Lemma~\ref{lem:1.1} into \eqref{eq:2.4}, and collecting
		the coefficients of $\eta$, $\eta^2$ and $\rho$, we may rewrite \eqref{eq:2.4} in the form
		\begin{equation}\label{eq:2.5}
			1152\, H(3, 1)\,(f^{-1})=\widetilde{A}+ \widetilde{B} \eta + \widetilde{C} \eta^2+ \widetilde{D}\rho
		\end{equation}
		where,
		\begin{align*}
			\widetilde{A}&=18\,c_{1}^{6}
			\;+\;
			51\,\gamma\,c_{1}^{4}\,(-4 + c_{1}^{2})
			\;+\;
			\gamma^{4}\,c_{1}^{2}\,(-4 + c_{1}^{2})^{2}
			\,+\,
			\gamma^{2}c_{1}^{2}\,(688 - 368c_{1}^{2} + 49c_{1}^{4})
			\;\\[3pt]
			&\quad+\gamma^{3}\,(-1152 + 704c_{1}^{2} - 172c_{1}^{4} + 17c_{1}^{6}), \\[3pt]
			\widetilde{B}&=4\,(-1 + |\gamma|^{2})\,c_{1}\,(-4 + c_{1}^{2})\,
			\Big( 3c_{1}^{2} + \gamma^{2}(-4 + c_{1}^{2}) + 4\gamma(5 + c_{1}^{2}) \Big),\\[3pt]
			\widetilde{C}&=-4\,(-1 + |\gamma|^{2})\,(-4 + c_{1}^{2})\,
			\Bigl(
			-8(-4 + c_{1}^{2})
			+ 8\,|\gamma|^{2}(-4 + c_{1}^{2})
			- 9\,(c_{1}^{2} + \gamma(-4 + c_{1}^{2}))\,\overline{\gamma}
			\Bigr),\\[2pt]
			\widetilde{D}&=36\,(-1 + |\gamma|^{2})(-1 + |\eta|^{2})\,(-4 + c_{1}^{2})\,
			\bigl(c_{1}^{2} + \gamma(-4 + c_{1}^{2})\bigr).
		\end{align*}
		Now, taking
		$
		x:=|\gamma|\in[0,1],\, y:=|\eta|\in[0,1].
		$
		Then, using the triangle inequality and the fact that $|\rho|\le 1$, we obtain
		\[
		1152\,\bigl|H(3,1)(f^{-1})\bigr|
		\le |\widetilde{A}|+|\widetilde{B}|\,|\eta|+|\widetilde{C}|\,|\eta|^{2}+|\widetilde{D}|\,|\rho|
		\le H(c_{1},x,y).
		\]
		For convenience, take $c_{1}=p_{1}$, where
		\begin{align}\label{eq:2.6}
			H(p_{1}, x, y) :=\;
			&\,18 p_{1}^{6}
			+ 51x\,p_{1}^{4}(4 - p_{1}^{2})
			+ x^{2} p_{1}^{2}(688 - 368 p_{1}^{2} + 49 p_{1}^{4}) \notag\\[4pt]
			& - x^{3}\!\left(-1152 + 704 p_{1}^{2} - 172 p_{1}^{4} + 17 p_{1}^{6}\right)
			+ x^{4} p_{1}^{2}(-4 + p_{1}^{2})^{2} \notag\\[4pt]
			& + 4\, y\,(1 - x^{2}) p_{1}(4 - p_{1}^{2})
			\left(3 p_{1}^{2} + x^{2}(4 - p_{1}^{2}) + 4x(5 + p_{1}^{2})\right) \notag\\[4pt]
			& + 4\,y^{2}\,(1 - x^{2})(4 - p_{1}^{2})
			\left(8(1 - x^{2})(4 - p_{1}^{2})
			+ 9(p_{1}^{2}x + x^{2}(4 - p_{1}^{2}))\right) \notag\\[4pt]
			& + 36(1 - x^{2})(1 - y^{2})(4 - p_{1}^{2})
			\left(p_{1}^{2} + x(4 - p_{1}^{2})\right).
		\end{align}
		We now determine the maximum value of \(H(p_{1},x,y)\) on the cuboid
		$
		\mathcal{D}:=\{(p_1, x, y)\in \mathbb{R}^3: 0\le p_{1}\le 2,\, 0\le x\le 1,\, 0\le y\le 1\}.
		$
		Our aim is to prove 
		\[
		\max_{\mathcal{D}} H(p_{1},x,y)=1152.
		\]
		Now, consider
		\[
		\Delta(p_{1},x,y):=1152-H(p_{1},x,y).
		\]
		A direct calculation shows that
		\[
		\Delta(p_{1},x,y)
		=(2-p_{1})(2+p_{1})(1-x)\,\Phi(p_{1},x,y),
		\]
		where
		\[
		\begin{aligned}
			\Phi(p_{1},x,y)=\;&
			-p_{1}^{4}x^{3}+16p_{1}^{4}x^{2}-33p_{1}^{4}x+18p_{1}^{4} +4p_{1}^{3}x^{3}y-12p_{1}^{3}x^{2}y-28p_{1}^{3}xy\\
			&-12p_{1}^{3}y +4p_{1}^{2}x^{3}y^{2}+4p_{1}^{2}x^{3}-68p_{1}^{2}x^{2}y^{2}-64p_{1}^{2}x^{2} -4p_{1}^{2}xy^{2}\\
			&+72p_{1}^{2}x+68p_{1}^{2}y^{2}+36p_{1}^{2}-16p_{1}x^{3}y-96p_{1}x^{2}y-80p_{1}xy \\
			&-16x^{3}y^{2}+128x^{2}y^{2}+144x^{2}+16xy^{2}+144x-128y^{2}+288 .
		\end{aligned}
		\]
		Let
		$
		p_{1}=2u,$  where $0\le u\le 1.
		$
		Then \(\Phi(2u,x,y)\) is a polynomial of degree at most \(4\) in \(u\), degree at most \(3\) in \(x\), and degree at most \(2\) in \(y\). Hence it can be written in the Bernstein form
		\[
		\Phi(2u,x,y)
		=
		\sum_{i=0}^{4}\sum_{j=0}^{3}\sum_{k=0}^{2}
		b_{ijk}\,B_{i}^{4}(u)B_{j}^{3}(x)B_{k}^{2}(y),
		\]
		where
		\[
		B_{m}^{n}(t)=\binom{n}{m}t^{m}(1-t)^{n-m},\qquad 0\le t\le 1.
		\]
		The corresponding Bernstein coefficient matrices are
		\[
		M_{0}=
		\begin{pmatrix}
			288&336&432&576\\[3mm]
			288&336&432&576\\[3mm]
			312&376&\frac{4264}{9}&608\\[3mm]
			360&456&\frac{1672}{3}&672\\[3mm]
			720&688&704&768
		\end{pmatrix},
		\]
		and the maximum entry of \(M_{0}\) is
		\[
		\max M_{0}=768.
		\]
		
		\[
		M_{1}=
		\begin{pmatrix}
			288&336&432&576\\[3mm]
			288&\frac{988}{3}&\frac{1232}{3}&528\\[3mm]
			312&\frac{1088}{3}&\frac{3880}{9}&512\\[3mm]
			348&\frac{1244}{3}&\frac{1376}{3}&480\\[3mm]
			672&576&480&384
		\end{pmatrix},
		\]
		and the maximum entry of \(M_{1}\) is
		\[
		\max M_{1}=672.
		\]
		
		\[
		M_{2}=
		\begin{pmatrix}
			160&\frac{640}{3}&\frac{1072}{3}&576\\[3mm]
			160&200&\frac{944}{3}&480\\[3mm]
			\frac{688}{3}&\frac{2440}{9}&\frac{3080}{9}&416\\[3mm]
			344&384&\frac{1112}{3}&288\\[3mm]
			768&608&352&0
		\end{pmatrix},
		\]
		and the maximum entry of \(M_{2}\) is
		\[
		\max M_{2}=768.
		\]
		Since every entry of \(M_{0}\), \(M_{1}\), and \(M_{2}\) is non-negative, and since each Bernstein basis polynomial is non-negative on \([0,1]\), it follows that
		\begin{align*}
		\Phi(2u,x,y)\ge 0
		\qquad \text{for all } (u,x,y)\in[0,1]\times[0, 1]\times[0, 1].
		\end{align*}
		Equivalently,
		\begin{align*}
		\Phi(p_{1},x,y)\ge 0
		\qquad \text{for all } (p_{1},x,y)\in[0,2]\times[0,1]\times[0,1].
		\end{align*}
		Therefore,
		\[
		\Delta(p_{1},x,y)
		=(2-p_{1})(2+p_{1})(1-x)\Phi(p_{1},x,y)\ge 0,
		\]
		because
		\[
		2-p_{1}\ge 0,\qquad 2+p_{1}>0,\qquad 1-x\ge 0
		\]
		throughout the domain. Hence
		\[
		H(p_{1},x,y)\le 1152, \qquad \text{ for } \quad 0\le p_{1}\le 2,\quad 0\le x\le 1,\quad 0\le y\le 1.
		\]
		To see that the bound $1152$ is sharp, we note that
		\[
		H(p_{1},1,y)=1152
		\qquad \text{for all } 0\le p_{1}\le 2,\ \ 0\le y\le 1,
		\]
		and also
		\[
		H(2,x,y)=1152
		\qquad \text{for all } 0\le x\le 1,\ \ 0\le y\le 1.
		\]
		Thus,
		\begin{align*}
		\max_{0\le p_{1}\le 2,\;0\le x\le 1,\;0\le y\le 1} H(p_{1},x,y)=1152.
		\end{align*}
	Therefore, we have
	\begin{align}\label{eq:2.7}
	\bigl|H(3,1)(f^{-1})\bigr|\le 1.
	\end{align}
To establish sharpness, we show that the Koebe function is extremal in this case. Consider the Koebe function
	\[
	k(z)=\frac{z}{(1-z)^{2}}=z+2z^{2}+3z^{3}+4z^{4}+\cdots .
	\]
	Its inverse has the expansion
	\[
	k^{-1}(w)=w-2w^{2}+5w^{3}-14w^{4}+42w^{5}+\cdots .
	\]
	Hence
	\[
	A_{2}=-2,\qquad A_{3}=5,\qquad A_{4}=-14,\qquad A_{5}=42,
	\]
	and therefore
	\[
	H(3,1)(k^{-1})
	=
	2A_{2}A_{3}A_{4}-A_{3}^{3}-A_{4}^{2}+A_{3}A_{5}-A_{2}^{2}A_{5}
	=1.
	\]
	Thus the equality in \eqref{eq:2.7} is attained for the Koebe function, and hence the bound attained is sharp. 
	\end{proof}
	\bigskip
	
	\noindent\textbf{Compliance of Ethical Standards:}\\[2mm]
	\noindent\textbf{Conflict of interest.} The authors declare that there is no conflict of interest regarding the publication of this paper.\\[1mm]
	\noindent\textbf{Data availability statement.} Data sharing is not applicable to this article as no datasets were generated or analyzed during the current study.\\[1mm]
	\noindent\textbf{Authors contributions.} Both the authors have made equal contributions in reading, writing, and preparing the manuscript.\\[1mm]
	\noindent\textbf{Acknowledgment:}
	The second named author acknowledges financial support from the Council of Scientific and Industrial Research (CSIR), Government of India, through a CSIR Fellowship.

\end{document}